\theoremstyle{plain}
\newtheorem{thm}{Theorem}[section]
\newtheorem{lem}[thm]{Lemma}
\newtheorem{cor}[thm]{Corollary}
\newtheorem{ques}[thm]{Question}
\theoremstyle{definition}
\newtheorem{dfn}[thm]{Definition}
\newtheorem{dfns-rems}[thm]{Definitions and Remarks}
\newtheorem{notas-rems}[thm]{Notations and Remarks}
\newtheorem{exmps-rems}[thm]{Examples and Remarks}
\begin{document}


\title[Depth and sdepth of symbolic powers of cover ideals]{Depth and Stanley depth of symbolic powers of cover ideals of graphs}


\author[S. A. Seyed Fakhari]{S. A. Seyed Fakhari}

\address{S. A. Seyed Fakhari, School of Mathematics, Statistics and Computer Science,
College of Science, University of Tehran, Tehran, Iran.}

\email{aminfakhari@ut.ac.ir}

\urladdr{http://math.ipm.ac.ir/$\sim$fakhari/}


\begin{abstract}
Let $G$ be a graph with $n$ vertices and let $S=\mathbb{K}[x_1,\dots,x_n]$ be the
polynomial ring in $n$ variables over a field $\mathbb{K}$. Assume that $J(G)$ is the cover ideal of $G$ and $J(G)^{(k)}$ is its $k$-th symbolic power. We prove that the sequences $\{{\rm sdepth}(S/J(G)^{(k)})\}_{k=1}^\infty$ and $\{{\rm sdepth}(J(G)^{(k)})\}_{k=1}^\infty$ are non-increasing and hence convergent. Suppose that $\nu_{o}(G)$ denotes the ordered matching number of $G$. We show that for every integer $k\geq 2\nu_{o}(G)-1$, the modules $J(G)^{(k)}$ and $S/J(G)^{(k)}$ satisfy the Stanley's inequality. We also provide an alternative proof for \cite[Theorem 3.4]{hktt} which states that ${\rm depth}(S/J(G)^{(k)})=n-\nu_{o}(G)-1$, for every integer $k\geq 2\nu_{o}(G)-1$.
\end{abstract}


\subjclass[2000]{Primary: 13C15, 05E99; Secondary: 13C13}


\keywords{Stanley depth, Cover ideal, Symbolic power, Ordered matching number}


\thanks{}


\maketitle


\section{Introduction} \label{sec1}

Let $S=\mathbb{K}[x_1,\dots,x_n]$ be the
polynomial ring in $n$ variables over a field $\mathbb{K}$ and suppose that $M$ is a nonzero finitely generated $\mathbb{Z}^n$-graded $S$-module. Let
$u\in M$ be a homogeneous element and $Z\subseteq
\{x_1,\dots,x_n\}$. The $\mathbb {K}$-subspace $u\mathbb{K}[Z]$
generated by all elements $uv$ with $v\in \mathbb{K}[Z]$ is
called a {\it Stanley space} of dimension $|Z|$, if it is a free
$\mathbb{K}[Z]$-module. Here, as usual, $|Z|$ denotes the number
of elements of $Z$. A decomposition $\mathcal{D}$ of $M$ as a
finite direct sum of Stanley spaces is called a {\it Stanley
decomposition} of $M$. The minimum dimension of a Stanley space
in $\mathcal{D}$ is called the {\it Stanley depth} of
$\mathcal{D}$ and is denoted by ${\rm sdepth} (\mathcal {D})$.
The quantity $${\rm sdepth}(M):=\max\big\{{\rm sdepth}
(\mathcal{D})\mid \mathcal{D}\ {\rm is\ a\ Stanley\
decomposition\ of}\ M\big\}$$ is called the {\it Stanley depth}
of $M$. As a convention, we set ${\rm sdepth}(M)=\infty$, when $M$ is the zero module. We say that a $\mathbb{Z}^n$-graded $S$-module $M$ satisfies the {\it Stanley's inequality} if $${\rm depth}(M) \leq
{\rm sdepth}(M).$$ In fact, Stanley \cite{s} conjectured that every $\mathbb{Z}^n$-graded $S$-module satisfies the Stanley's inequality.
We refer to \cite{psty} for a reader friendly introduction to Stanley depth, and to \cite{h} for a nice survey on this topic.

The Stanley's conjecture has been recently disproved in \cite{abcj}. The counterexample presented in \cite{abcj} lives in the category of squarefree monomial ideals. Thus, one can still ask whether the Stanley's inequality holds for non-squarefree monomial ideals. Of particular interest are the high powers of monomial ideals. In other words, we ask following question.

\begin{ques} \label{q1}
Let $I$ be a monomial ideal. Is it true that $I^k$ and $S/I^k$ satisfy the Stanley's inequality for every integer $k\gg 0$?
\end{ques}

We approached this question for edge ideals in \cite{asy}, \cite{psy} and \cite{s6}. The most general results are obtained in \cite{s6}. In that paper, we proved that if $G$ is a graph with $n$ vertices and $I(G)$ is its edge ideal, then $S/I(G)^k$ satisfies the Stanley's inequality for every integer $k\geq n-1$ \cite[Corollary 2.5]{s6}. If moreover $G$ is a non-bipartite graph, or
at least one of the connected components of $G$ is a tree with at least one edge, then $I(G)^k$ satisfies the Stanley's inequality for every integer $k\geq n-1$ \cite[Corollary 3.6]{s6}.

Recently, in \cite{s4}, we studied the powers of cover ideal of bipartite graphs. We proved in \cite[Corollary 3.5]{s4} that if $G$ is a bipartite graph with cover ideal $J(G)$, then $J(G)^k$ and $S/J(G)^k$ satisfy the Stanley`s conjecture for $k\gg 0$. On the other hand, we know from \cite[Corollary 2.6]{grv} that for every bipartite graph $G$ and every integer $k\geq 1$, we have $J(G)^k=J(G)^{(k)}$, where $J(G)^{(k)}$ denotes the $k$-th symbolic power of $J(G)$. Hence,  \cite[Corollary 3.5]{s4} essentially says that for any bipartite graph $G$, the modules $J(G)^{(k)}$ and $S/J(G)^{(k)}$ satisfy the Stanley's inequality for every integer $k\gg 0$. In this regard, we ask an analogue of Question \ref{q1} for symbolic powers.

\begin{ques} \label{q2}
Let $I$ be a squarefree monomial ideal. Is it true that $I^{(k)}$ and $S/I^{(k)}$ satisfy the Stanley's inequality for every integer $k\gg 0$?
\end{ques}

In this  paper, we give a positive answer to Question \ref{q2}, in the case that $I=J(G)$ is the cover ideal of a graph. We mention that the depth of symbolic powers of cover ideals has been studied by Hoa et al. \cite{hktt}. Using a result of Hoa and Trung \cite{ht2}, the authors of \cite{hktt} notice that the depth of symbolic powers of a squarefree monomial ideal is stable. In the case of cover ideal of graphs, they prove that the limit value can be combinatorially described. In fact, they prove the following stronger result. Let $G$ be a graph with $n$ vertices and let $\nu_{o}(G)$ denote the ordered matching number of $G$ (see Definition \ref{om}). It is shown in \cite[Theorem 3.4]{hktt} that for every integer $k\geq 2\nu_{o}(G)-1$, we have ${\rm depth}(S/J(G)^{(k)})=n-\nu_{o}(G)-1$. In Theorem \ref{ldepth}, we provide an alternative proof for this result. Indeed, the proof in \cite{hktt} is based on a formula due to Takayama \cite[Theorem 2.2]{t}, while we do not use Takayama's formula. Next, we consider the sequences $\{{\rm sdepth}(S/J(G)^{(k)})\}_{k=1}^\infty$ and $\{{\rm sdepth}(J(G)^{(k)})\}_{k=1}^\infty$. We prove in Theorem \ref{sdepthsym} that theses sequences are non-increasing and thus, convergent. Unfortunately, we are not able to determine the precise value of limit of theses sequences. However, we prove in Theorem \ref{main} that for every integer $k\geq 0$, we have$${\rm sdepth}(J(G)^{(k)})\geq n-\nu_{o}(G) \ \ \ \ {\rm and} \ \ \ \ {\rm sdepth}(S/J(G)^{(k)})\geq n-\nu_{o}(G)-1.$$In other words, we have the following lower bounds for the limit of the above mentioned sequences.$$\lim_{k\rightarrow\infty}{\rm sdepth}(J(G)^{(k)})\geq n-\nu_{o}(G) \ \ \ \  {\rm and} \ \ \ \  \lim_{k\rightarrow\infty}{\rm sdepth}(S/J(G)^{(k)})\geq n-\nu_{o}(G)-1.$$

Finally, we conclude from Theorems \ref{ldepth} and \ref{main} that for every graph $G$ and every integer $k\geq 2\nu_{o}(G)-1$, the modules $J(G)^{(k)}$ and $S/J(G)^{(k)}$ satisfy the Stanley's inequality.


\section{Preliminaries} \label{sec1'}

In this section, we provide the definitions and basic facts which will be used in the next section.

Let $G$ be a graph with vertex set $V(G)=\big\{x_1, \ldots,
x_n\big\}$ and edge set $E(G)$ (by abusing the notation, we identify the vertices of $G$ with the variables of $S$). For a vertex $x_i$, the {\it neighbor set} of $x_i$ is $N_G(x_i)=\{x_j\mid x_ix_j\in E(G)\}$ and we set $N_G[x_i]=N_G(x_i)\cup \{x_i\}$ and call it the {\it closed neighborhood} of $x_i$. For every subset $A\subset V(G)$, the graph $G\setminus A$ is the graph with vertex set $V(G\setminus A)=V(G)\setminus A$ and edge set $E(G\setminus A)=\{e\in E(G)\mid e\cap A=\emptyset\}$. A subgraph $H$ of $G$ is called {\it induced} provided that two vertices of $H$ are adjacent if and only if they are adjacent in $G$. A {\it matching} in $G$ is a subgraph consisting of pairwise disjoint edges. If the subgraph is an induced subgraph, the matching is an {\it induced matching}. The cardinality of the maximum induced matching of $G$ is denoted by ${\rm indmatch}(G)$. A subset $W$ of $V(G)$ is called an {\it independent subset} of $G$ if there are no edges among the vertices of $W$. A subset $C$ of $V(G)$ is called a {\it vertex cover} of the graph $G$ if every edge of $G$ is incident to at least one vertex of $C$. A vertex cover $C$ is called a {\it minimal vertex cover} of $G$ if no proper subset of $C$ is a vertex cover of $G$.

Next, we define the notion of ordered matching for a graph. It was introduced in \cite{cv} and plays a key role in this paper.

\begin{dfn} \label{om}
Let $G$ be a graph, and let $M=\{\{a_i,b_i\}\mid 1\leq i\leq r\}$ be a
nonempty matching of $G$. We say that $M$ is an {\it ordered matching} of
$G$ if the following hold:
\begin{itemize}
\item[(1)] $A:=\{a_1,\ldots,a_r\} \subseteq V(G)$ is a set of
    independent vertices of $G$; and

\item[(2)] $\{a_i, b_j\}\in E(G)$ implies that $i\leq j$.
\end{itemize}
The {\it ordered matching number} of $G$, denoted by $\nu_{o}(G)$, is
defined to be $$\nu_{o}(G)=\max\{|M|\mid M\subseteq E(G)\ {\rm is\ an\
ordered\ matching\ of} \ G\}.$$
\end{dfn}

The edge ideal $I(G)$ of $G$ is the ideal of $S$ generated by the squarefree  monomials  $x_ix_j$, where $\{x_i, x_j\}$ is an edge of $G$. The Alexander dual of the edge ideal of $G$ in $S$, i.e., the
ideal $$J(G)=I(G)^{\vee}=\bigcap_{\{x_i,x_j\}\in E(G)}(x_i,x_j),$$ is called the
{\it cover ideal} of $G$ in $S$. The reason for this name is due to the
well-known fact that the generators of $J(G)$ correspond to minimal vertex covers of $G$.

Let $I$ be an ideal of $S$ and let ${\rm Min}(I)$ denote the set of minimal primes of $I$. For every integer $k\geq 1$, the $k$-th {\it symbolic power} of $I$,
denoted by $I^{(k)}$, is defined to be$$I^{(k)}=\bigcap_{\frak{p}\in {\rm Min}(I)} {\rm Ker}(R\rightarrow (R/I^k)_{\frak{p}}).$$Let $I$ be a squarefree monomial ideal in $S$ and suppose that $I$ has the irredundant
primary decomposition $$I=\frak{p}_1\cap\ldots\cap\frak{p}_r,$$ where every
$\frak{p}_i$ is an ideal of $S$ generated by a subset of the variables of
$S$. It follows from \cite[Proposition 1.4.4]{hh} that for every integer $k\geq 1$, $$I^{(k)}=\frak{p}_1^k\cap\ldots\cap
\frak{p}_r^k.$$

Let $M$ be a finitely generated graded $S$-Module. The {\it projective dimension} and the {\it Castelnuovo-Mumford regularity} (or simply, {\it regularity}) of $M$, are defined as follows.
$${\rm pd}(M)=\max\{i|\ {\rm Tor}_i^S(\mathbb{K}, M)\neq0\}, \ \ \ \ {\rm reg}(M)=\max\{j-i|\ {\rm Tor}_i^S(\mathbb{K}, M)_j\neq0\}.$$


\section{Main results} \label{sec2}

Let $G$ be a graph with $n$ vertices and let $J(G)$ denote its cover ideal. It is shown in \cite[Theorem 3.4]{hktt} that for every integer $k\geq 2\nu_{o}(G)-1$, we have$${\rm depth}(S/J(G)^{(k)})=n-\nu_{o}(G)-1.$$In Theorem \ref{ldepth}, we provide an alternative proof for this result. We use the method of polarization in our proof. Hence, we briefly review this method for the sake of completeness.

Let $I$ be a monomial ideal of
$S$ with minimal generators $u_1,\ldots,u_m$,
where $u_j=\prod_{i=1}^{n}x_i^{a_{i,j}}$, $1\leq j\leq m$. For every $i$
with $1\leq i\leq n$, let $a_i=\max\{a_{i,j}\mid 1\leq j\leq m\}$, and
suppose that $$T=\mathbb{K}[x_{1,1},x_{1,2},\ldots,x_{1,a_1},x_{2,1},
x_{2,2},\ldots,x_{2,a_2},\ldots,x_{n,1},x_{n,2},\ldots,x_{n,a_n}]$$ is a
polynomial ring over the field $\mathbb{K}$. Let $I^{{\rm pol}}$ be the squarefree
monomial ideal of $T$ with minimal generators $u_1^{{\rm pol}},\ldots,u_m^{{\rm pol}}$, where
$u_j^{{\rm pol}}=\prod_{i=1}^{n}\prod_{k=1}^{a_{i,j}}x_{i,k}$, $1\leq j\leq m$. The ideal $I^{{\rm pol}}$
is called the {\it polarization} of $I$. We know from \cite[Corollary 1.6.3]{hh} that polarization preserves the projective dimension, i.e.,$${\rm pd}(S/I)={\rm pd}(T/I^{{\rm pol}}).$$

\begin{thm} \label{ldepth}
Let $G$ be a graph with $n$ vertices. Then for every integer $k\geq 2\nu_{o}(G)-1$, we have ${\rm depth}(S/J(G)^{(k)})=n-\nu_{o}(G)-1$.
\end{thm}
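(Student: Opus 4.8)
The plan is to prove the two inequalities $\mathrm{depth}(S/J(G)^{(k)}) \leq n-\nu_o(G)-1$ and $\mathrm{depth}(S/J(G)^{(k)}) \geq n-\nu_o(G)-1$ separately, and since the paper advertises a proof avoiding Takayama's formula in favor of polarization, I would route everything through the identity $\mathrm{pd}(S/I)=\mathrm{pd}(T/I^{\mathrm{pol}})$ together with the Auslander--Buchsbaum formula $\mathrm{depth}(S/I)=n-\mathrm{pd}(S/I)$. The reformulation I would aim for is
\[
\mathrm{pd}\bigl(S/J(G)^{(k)}\bigr)=\nu_o(G)+1 \qquad \text{for all } k\geq 2\nu_o(G)-1,
\]
noting that the ambient ring $T$ after polarizing $J(G)^{(k)}$ has more variables than $S$, so the Auslander--Buchsbaum bookkeeping must be done carefully: one tracks $\mathrm{pd}$ (which polarization preserves) rather than depth directly.

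For the upper bound on depth (equivalently the lower bound $\mathrm{pd}\geq \nu_o(G)+1$), I would first extract an ordered matching $M=\{\{a_i,b_i\}\mid 1\leq i\leq r\}$ realizing $r=\nu_o(G)$, and use its combinatorial structure — the independence of $\{a_1,\dots,a_r\}$ and the ordering condition $\{a_i,b_j\}\in E(G)\Rightarrow i\leq j$ — to exhibit a lower bound for the projective dimension. The natural tool is a localization or restriction argument: passing to the subgraph or the variables supported on $A\cup B$ should produce a module whose projective dimension is at least $r+1$, and projective dimension cannot decrease under the relevant specialization. Concretely, I would look for a nonvanishing Tor (or local cohomology of the dual) forced by the ordered matching, so that $\mathrm{pd}(S/J(G)^{(k)})\geq \nu_o(G)+1$ holds for every $k\geq 1$. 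I expect this direction to be the more delicate one, since it must extract the exact contribution $\nu_o(G)+1$ from the ordering constraints rather than from the ordinary matching number.

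For the lower bound on depth (equivalently $\mathrm{pd}\leq \nu_o(G)+1$), the hypothesis $k\geq 2\nu_o(G)-1$ must be used essentially, and this is where I would apply the Alexander-duality dictionary: $J(G)=I(G)^\vee$, and via \cite[Proposition 1.4.4]{hh} the symbolic power decomposes as $J(G)^{(k)}=\bigcap_{\{x_i,x_j\}\in E(G)}(x_i,x_j)^k$. I would polarize and analyze the polarized squarefree ideal, where a Terai-type or Hochster-type formula relates the projective dimension of $T/J(G)^{(k),\mathrm{pol}}$ to the regularity of the Alexander dual. The strategy is then to bound $\mathrm{reg}$ of the relevant dual object by $\nu_o(G)+1$ once $k$ is large enough; the threshold $k\geq 2\nu_o(G)-1$ should emerge precisely as the point where the symbolic power is ``saturated'' with respect to the ordered matching structure, so that no larger projective dimension can appear.

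The main obstacle, as I see it, will be the upper bound direction, i.e.\ proving $\mathrm{pd}\leq \nu_o(G)+1$ for all $k$ beyond the threshold, because it requires a uniform control on the homology of the polarized ideal as $k$ varies, and the stability must kick in exactly at $k=2\nu_o(G)-1$. I would handle this by induction on $\nu_o(G)$ or on the number of vertices, deleting a suitable vertex $x$ (ideally a $b_i$ from a maximum ordered matching) and comparing $J(G)$ with $J(G\setminus x)$ and with the colon/quotient ideals via a short exact sequence; the depth lemma applied to that sequence, combined with the inductive hypothesis on the smaller graphs, should yield the bound. The delicate point is verifying that deletion decreases $\nu_o(G)$ by exactly one and that the threshold $2\nu_o(G)-1$ transforms correctly under the induction, so that the two estimates meet at the claimed value $n-\nu_o(G)-1$.
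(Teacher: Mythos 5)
Your proposal has the two directions of the proof exactly swapped, and this is a genuine gap rather than a presentational issue. You claim that the bound $\mathrm{pd}(S/J(G)^{(k)})\geq \nu_o(G)+1$ should hold for every $k\geq 1$, and that the hypothesis $k\geq 2\nu_o(G)-1$ is what makes the opposite inequality $\mathrm{pd}\leq \nu_o(G)+1$ work. In fact it is the other way around. The inequality $\mathrm{depth}(S/J(G)^{(k)})\geq n-\nu_o(G)-1$ (equivalently $\mathrm{pd}\leq\nu_o(G)+1$) holds for \emph{all} $k\geq 1$, and in the paper it is disposed of in one line by citing Varbaro together with Constantinescu--Varbaro. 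The threshold is needed precisely in the direction you assert is threshold-free, because $\mathrm{pd}\geq\nu_o(G)+1$ simply fails for small $k$. Take $G$ to be the path on four vertices $x_1,x_2,x_3,x_4$: then $\nu_o(G)=2$ (take $a_1=x_1,b_1=x_2$, $a_2=x_4,b_2=x_3$), but $J(G)=(x_2x_3,x_2x_4,x_1x_3)$ satisfies $\mathrm{pd}(S/J(G))=\mathrm{reg}(I(G))=2<\nu_o(G)+1=3$, i.e.\ $\mathrm{depth}(S/J(G))=2>n-\nu_o(G)-1=1$. So any localization/Tor argument aiming to force $\mathrm{pd}\geq\nu_o(G)+1$ at $k=1$ is doomed; the depth of the symbolic powers is non-increasing in $k$ and only \emph{descends} to the stable value $n-\nu_o(G)-1$ once $k\geq 2\nu_o(G)-1$.

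Because of this swap, the machinery you allocate to each half is aimed at the wrong target. The real content of the paper's proof sits in the half you treat as routine: after Auslander--Buchsbaum and polarization (your framing there does agree with the paper), the key fact is that $(J(G)^{(k)})^{\mathrm{pol}}$ is again a \emph{cover ideal}, namely of the graph $G_k$ on the vertices $x_{i,p}$, $1\leq p\leq k$, with $\{x_{i,p},x_{j,q}\}\in E(G_k)$ if and only if $\{x_i,x_j\}\in E(G)$ and $p+q\leq k+1$. Terai's theorem converts the desired bound $\mathrm{pd}(T/J(G_k))\geq\nu_o(G)+1$ into $\mathrm{reg}(T/I(G_k))\geq\nu_o(G)$, which by Katzman's bound follows from $\mathrm{indmatch}(G_k)\geq\nu_o(G)$. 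The ordered matching $\{\{x_i,x_{t+i}\}\mid 1\leq i\leq t\}$ of $G$ (with $t=\nu_o(G)$) is then lifted to the explicit induced matching $\{\{x_{i,t+1-i},\,x_{t+i,\,k+i-t}\}\mid 1\leq i\leq t\}$ of $G_k$, and this is exactly where the hypothesis enters: $k\geq 2t-1$ guarantees $(k+i-t)+(k+j-t)\geq k+2$, so the second endpoints are pairwise non-adjacent in $G_k$, and the ordering condition on the matching handles the mixed pairs. Your proposed vertex-deletion induction with short exact sequences for the direction $\mathrm{pd}\leq\nu_o(G)+1$ is unnecessary (that direction is a citation, valid for all $k$), and no amount of work there can repair the half whose threshold bookkeeping you have inverted.
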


\begin{proof}
Without loss of generality suppose that $G$ has no isolated vertex. Combining \cite[Proposition 2.4]{v} and \cite[Theorem 2.8]{cv}, we have$${\rm depth}(S/J(G)^{(k)})\geq n-\nu_{o}(G)-1,$$for every integer $k\geq 1$. Thus, we prove that$${\rm depth}(S/J(G)^{(k)})\leq n-\nu_{o}(G)-1,$$for every integer $k\geq 2\nu_{o}(G)-1$. Assume that $k\geq 2\nu_{o}(G)-1$ is an integer and suppose that $V(G)=\{x_1, \ldots, x_n\}$ is the vertex set of $G$. Using the Auslander-Buchsbaum formula, we need to show that ${\rm pd}(S/J(G)^{(k)})\geq \nu_{o}(G)+1$. As polarization preserves the projective dimension, it is enough to prove that$${\rm pd}(T/(J(G)^{(k)})^{{\rm pol}})\geq \nu_{o}(G)+1,$$where $T$ is a new polynomial ring containing $(J(G)^{(k)})^{{\rm pol}}$. We know from \cite[Lemma 3.4]{s3} that $(J(G)^{(k)})^{{\rm pol}}$ is the cover ideal of a new graph, say $G_k$, with the vertex set $V(G_k)=\{x_{i,p}\mid 1\leq i\leq n,  1\leq p\leq k\}$ and the edge set$$E(G_k)=\{\{x_{i,p}, x_{j,q}\}\mid \{x_i, x_j\}\in E(G) \  {\rm and} \  p+q\leq k+1\}.$$ Using Terai's theorem \cite[Theorem 8.1.10]{hh}, it is sufficient to show that $${\rm reg}(T/(I(G_k))\geq \nu_{o}(G).$$ By \cite[Lemma 2.2]{k}, we know that$${\rm reg}(T/(I(G_k))\geq {\rm indmatch}(G_k).$$We show that ${\rm indmatch}(G_k)\geq \nu_{o}(G)$ and this completes the proof. Set $t=\nu_{o}(G)$ and assume without loss of generality that $\big\{\{x_i,x_{t+i}\}\mid 1\leq i\leq t\big\}$ is an ordered matching of $G$ so that
\begin{itemize}
\item[$\bullet$] $\{x_1, \ldots, x_t\}$ is an independent subset of
     vertices of $G$.
\item[$\bullet$] $\{x_i, x_{t+j}\}\in E(G)$ implies that $i\leq j$.
\end{itemize}

We show that the set of edges$$\big\{\{x_{i,t+1-i}, x_{t+i, k+i-t}\}\mid 1\leq i\leq t\big\}$$is an induced matching of $G_k$. Indeed, fix the integers $i$ and $j$ with $1\leq i <j\leq t$. As $\{x_1, \ldots, x_t\}$ is an independent subset of
vertices of $G$, it follows from the construction of $G_k$ that the vertices $x_{i,t+1-i}$ and $x_{j,t+1-j}$ are not adjacent in $G_k$. Since, $i<j$, we conclude that $\{x_j, x_{t+i}\}\notin E(G)$ and it again follows from the construction of $G_k$ that $\{x_{j,t+1-j}, x_{t+i, k+i-t}\}\notin E(G_k)$. On the other hand,$$(t+1-i)+(k+j-t)=k+j-i+1>k+1,$$which shows that $\{x_{i,t+1-i}, x_{t+j, k+j-t}\}$ is not an edge of $G_k$. Finally, since $k\geq 2t-1$, $i\geq 1$ and $j\geq 2$, we have$$(k+i-t)+(k+j-t)=k+(k-2t)+i+j\geq k+2.$$Thus $\{x_{t+i,k+i-�t}, x_{t+j,k+j-t}\}\notin E(Gk)$.

Therefore, $$\big\{\{x_{i,t+1-i}, x_{t+i, k+i-t}\}\mid 1\leq i\leq t\big\}$$is an induced matching of $G_k$. Hence, ${\rm indmatch}(G_k)\geq t=\nu_{o}(G)$.
\end{proof}

Let $G$ be a graph. It is shown in \cite[Theorem 3.2]{hktt} that the depth of symbolic powers of $J(G)$ is a non-increasing function. Our next goal is to prove a similar result for the Stanley depth (see Theorem \ref{sdepthsym}). In order to do this, we need to recall the notion of the Stanley regularity.

Let $M$ be a finitely generated $\mathbb{Z}^n$-graded $S$-module and assume that$$\mathcal{D} : M=\bigoplus_{i=1}^m u_i \mathbb{K}[Z_i]$$is a Stanley decomposition of $M$. Following \cite{so}, we define the {\it Stanley regularity} of $\mathcal{D}$ to be ${\rm sreg}(\mathcal{D})=\max\{{\rm deg}(u_i): 1=1, \ldots, m\}$. Also the {\it Stanley regularity} of $M$ is defined as$${\rm sreg}(M):=\min\big\{{\rm sreg}
(\mathcal{D})\mid \mathcal{D}\ {\rm is\ a\ Stanley\ decomposition\ of}\ M\big\}.$$For every graph $G$ with $n$ vertices, we know from \cite[Corollary 46]{h} that
\[
\begin{array}{rl}
{\rm sreg}(I(G))+{\rm sdepth}(S/J(G))=n \ \ \ {\rm and} \ \ \ {\rm sdepth}(J(G))+{\rm sreg}(S/I(G))=n.
\end{array} \tag{$\dagger$} \label{dag}
\]

\begin{lem} \label{del}
Assume  that $I$ is a monomial ideal of $S=\mathbb{K}[x_1,\ldots,x_n]$. Let $S'=\mathbb{K}[x_1, \ldots, x_{j-1}, x_{j+1}, \ldots x_n]$ be the polynomial ring obtained from $S$ by deleting the variable $x_j$ and consider the ideal $I'=I\cap S'$. Then

\begin{itemize}
\item[(i)] ${\rm sreg}(I')\leq {\rm sreg}(I)$.
\item[(ii)] ${\rm sreg}(S'/I')\leq {\rm sreg}(S/I)$.
\end{itemize}
\end{lem}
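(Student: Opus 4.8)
The plan is to exploit the structure of Stanley decompositions directly, constructing a decomposition of the "smaller" object from an optimal one of the "larger" object without increasing the maximal generator degree. First I would recall that $I' = I \cap S'$ consists precisely of those monomials in $I$ that do not involve the variable $x_j$; equivalently, $I'$ is obtained by setting $x_j = 0$ in the generators, keeping only the monomials that survive. The key observation is that there is a natural $\mathbb{K}$-linear projection relating the two settings: a monomial $u \in S$ lies in $S'$ iff $x_j \nmid u$, and every monomial of $S$ can be written uniquely as $x_j^a \cdot w$ with $x_j \nmid w$.

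For part (i), I would start with a Stanley decomposition $\mathcal{D}: I = \bigoplus_{i=1}^m u_i \mathbb{K}[Z_i]$ achieving ${\rm sreg}(I) = \max_i \deg(u_i)$. The goal is to produce from $\mathcal{D}$ a Stanley decomposition $\mathcal{D}'$ of $I'$ in which every generator has degree at most $\max_i \deg(u_i)$; this immediately gives ${\rm sreg}(I') \leq {\rm sreg}(\mathcal{D}') \leq {\rm sreg}(\mathcal{D}) = {\rm sreg}(I)$, since ${\rm sreg}$ is defined as a minimum over all decompositions. To build $\mathcal{D}'$, I would restrict each Stanley space $u_i\mathbb{K}[Z_i]$ to $S'$. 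The delicate point is how $u_i\mathbb{K}[Z_i] \cap S'$ decomposes: if $x_j \mid u_i$ then $u_i \mathbb{K}[Z_i]$ contains no monomial free of $x_j$ unless $x_j \in Z_i$, in which case intersecting with $S'$ is subtle. The clean approach is to handle the cases $x_j \in Z_i$ and $x_j \notin Z_i$ separately: when $x_j \notin Z_i$, the intersection with $S'$ is simply $u_i\mathbb{K}[Z_i]$ if $x_j \nmid u_i$ and is empty (contributes nothing) otherwise; when $x_j \in Z_i$, setting $x_j = 0$ turns $u_i\mathbb{K}[Z_i]$ into $u_i'\mathbb{K}[Z_i \setminus \{x_j\}]$, where $u_i'$ is $u_i$ with the $x_j$-factor removed, so $\deg(u_i') \leq \deg(u_i)$. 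Verifying that these restricted pieces assemble into a genuine (direct-sum) Stanley decomposition of $I'$, and that the degree bound is preserved in every case, is the substance of the argument.

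For part (ii) I would run the identical restriction procedure on a Stanley decomposition of $S/I$, noting that $S'/I'$ is the image of $S/I$ under the same "kill $x_j$" operation and that the combinatorial bookkeeping is formally the same. The main obstacle will be the careful case analysis in the intersection-with-$S'$ step: one must confirm that the restricted Stanley spaces remain free $\mathbb{K}[Z]$-modules, that their direct sum is exactly $I'$ (respectively $S'/I'$) with no overlaps and no omissions, and crucially that no generator degree ever increases under the restriction. Once that verification is complete, the degree bound follows immediately from the definition of ${\rm sreg}$ as an infimum over decompositions, and both inequalities drop out.
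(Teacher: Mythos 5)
Your overall strategy coincides with the paper's: start from an optimal Stanley decomposition $\mathcal{D}\colon I=\bigoplus_{i=1}^m u_i\mathbb{K}[Z_i]$ and restrict it to $S'$, observing that restriction cannot increase generator degrees. However, your case analysis of $u_i\mathbb{K}[Z_i]\cap S'$ contains a genuine error. You claim that if $x_j\mid u_i$ then $u_i\mathbb{K}[Z_i]$ contains no monomial free of $x_j$ ``unless $x_j\in Z_i$,'' and accordingly, in the case $x_j\in Z_i$, you propose to ``set $x_j=0$'' and replace $u_i\mathbb{K}[Z_i]$ by $u_i'\mathbb{K}[Z_i\setminus\{x_j\}]$, where $u_i'$ is $u_i$ with its $x_j$-factor stripped. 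Both statements are wrong: every monomial of $u_i\mathbb{K}[Z_i]$ is a multiple of $u_i$, so if $x_j\mid u_i$ the intersection with $S'$ is zero regardless of whether $x_j\in Z_i$; and stripping the $x_j$-factor from $u_i$ is division by a power of $x_j$, not intersection with $S'$, and it produces a space that need not lie in $I'$ at all. Concretely, take $I=(x_1,x_2)\subset\mathbb{K}[x_1,x_2]$, $j=1$, with the decomposition $I=x_1\mathbb{K}[x_1]\oplus x_2\mathbb{K}[x_2]\oplus x_1x_2\mathbb{K}[x_1,x_2]$. Here $I'=x_2\mathbb{K}[x_2]$, but your recipe turns $x_1\mathbb{K}[x_1]$ into $\mathbb{K}$ (which contains $1\notin I'$) and turns $x_1x_2\mathbb{K}[x_1,x_2]$ into a second copy of $x_2\mathbb{K}[x_2]$, so the resulting sum is neither contained in $I'$ nor direct.

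The fix is the paper's simpler dichotomy, which is governed by divisibility of $u_i$ by $x_j$, not by membership of $x_j$ in $Z_i$: a monomial of $I'$ lies in $u_i\mathbb{K}[Z_i]$ only if $x_j\nmid u_i$, in which case it lies in $u_i\mathbb{K}[Z_i\setminus\{x_j\}]$. Hence, after reordering so that $u_1,\dots,u_t$ are divisible by $x_j$ and $u_{t+1},\dots,u_m$ are not, one gets $I'=\bigoplus_{i=t+1}^m u_i\mathbb{K}[Z_i\setminus\{x_j\}]$: spaces whose generator involves $x_j$ are discarded outright, and the surviving generators are kept unchanged, so ${\rm sreg}(I')\leq{\rm sreg}(\mathcal{D}')\leq{\rm sreg}(\mathcal{D})={\rm sreg}(I)$. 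With this correction your argument for (i), and the identical restriction applied to a decomposition of $S/I$ for (ii), goes through.
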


\begin{proof}
(i) consider a Stanley decomposition $$\mathcal{D} : I=\bigoplus_{i=1}^m u_i \mathbb{K}[Z_i]$$ of $I$, such that ${\rm sreg}(\mathcal{D})={\rm sreg}(I)$. Without loss of generality, we may assume that there exist an integer $t$ with $0\leq t \leq m $ such that $u_1, \ldots, u_t$ are divisible by $x_j$ and $u_{t+1}, \ldots, u_m$ are not divisible by $x_j$. Then $\mathcal{D}' : I'=\bigoplus_{i=t+1}^m u_i \mathbb{K}[Z_i\setminus \{x_j\}]$ is a Stanley decomposition of $I'$. Then ${\rm sreg}(I')\leq {\rm sreg}(\mathcal{D}')\leq {\rm sreg}(I)$.

The proof of (ii) is similar.
\end{proof}

We are now ready to prove that the Stanley depth of symbolic powers of cover ideals is a non-increasing function.

\begin{thm} \label{sdepthsym}
Let $G$ be a graph. Then for every integer $k\geq 1$, we have

\begin{itemize}
\item[(i)] ${\rm sdepth}(S/J(G)^{(k)})\geq {\rm sdepth}(S/J(G)^{(k+1)}).$
\item[(ii)] ${\rm sdepth}(J(G)^{(k)})\geq {\rm sdepth}(J(G)^{(k+1)}).$
\end{itemize}
In particular, the sequences $\{{\rm sdepth}(S/J(G)^{(k)})\}_{k=1}^\infty$ and $\{{\rm sdepth}(J(G)^{(k)})\}_{k=1}^\infty$ are convergent.
\end{thm}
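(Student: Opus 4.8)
The plan is to run the polarization strategy of Theorem \ref{ldepth}, but now at the level of Stanley depth and Stanley regularity instead of projective dimension. Recall from the proof of Theorem \ref{ldepth} that $(J(G)^{(k)})^{\rm pol}$ is the cover ideal $J(G_k)$ of the graph $G_k$ on the vertex set $\{x_{i,p}\mid 1\le i\le n,\ 1\le p\le k\}$. First I would dispose of isolated vertices: each merely tensors every module in sight with a polynomial ring in one variable, shifting both sequences by a constant independent of $k$, so I may assume $G$ has none, whence $T_k:=\mathbb{K}[V(G_k)]$ has exactly $nk$ variables, i.e.\ $n(k-1)$ more than $S$. Using the theorem of Ichim, Katth\"an and Moyano-Fern\'andez that polarization preserves the Stanley depth up to the number of new variables, together with the duality \eqref{dag} applied to $G_k$, I would record the closed formulas
$${\rm sdepth}(S/J(G)^{(k)})=n-{\rm sreg}(I(G_k)) \quad\text{and}\quad {\rm sdepth}(J(G)^{(k)})=n-{\rm sreg}(T_k/I(G_k)).$$
Indeed \eqref{dag} gives ${\rm sdepth}(T_k/J(G_k))=nk-{\rm sreg}(I(G_k))$ and ${\rm sdepth}(J(G_k))=nk-{\rm sreg}(T_k/I(G_k))$, and subtracting the $n(k-1)$ extra variables yields the identities.

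With these formulas both assertions collapse to a single monotonicity statement for edge ideals, namely
$${\rm sreg}(I(G_k))\le {\rm sreg}(I(G_{k+1})) \quad\text{and}\quad {\rm sreg}(T_k/I(G_k))\le {\rm sreg}(T_{k+1}/I(G_{k+1})).$$
My strategy is to realize $G_k$ as an \emph{induced} subgraph of $G_{k+1}$ and then quote Lemma \ref{del}. The naive choice—keeping the bottom $k$ layers of $G_{k+1}$—fails, since the induced subgraph of $G_{k+1}$ on $\{x_{i,p}\mid p\le k\}$ acquires the spurious edges coming from pairs with $p+q=k+2$. The remedy is to delete a single \emph{middle} layer: put $s=\lceil (k+2)/2\rceil$ and remove $\{x_{i,s}\mid 1\le i\le n\}$ from $G_{k+1}$. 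Writing $\sigma$ for the order-preserving bijection of $\{1,\dots,k\}$ onto $\{1,\dots,k+1\}\setminus\{s\}$, the crux is the biconditional
$$\sigma(p)+\sigma(q)\le k+2 \iff p+q\le k+1 \qquad (1\le p,q\le k),$$
whose only binding cases are $p+q\in\{k+1,k+2\}$: here $2s\ge k+2$ keeps every edge of $G_k$ (a pair with $p+q=k+1$) below the threshold, while $2s\le k+3$ keeps every non-edge with $p+q=k+2$ above it, and my choice of $s$ satisfies both. Granting this, the relabeling $x_{i,p}\mapsto x_{i,\sigma(p)}$ identifies $G_k$ with the induced subgraph of $G_{k+1}$ obtained by deleting layer $s$.

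The decisive feature is that $\sigma$ is the \emph{same} for every colour $i$, so it respects all edges of $G$ at once and the argument is insensitive to whether $G$ is bipartite; this is precisely what lets the reasoning extend from the bipartite setting of \cite{s4} to arbitrary $G$, where a colour-dependent shift would be obstructed by odd cycles. To finish, I would delete the $n$ vertices of layer $s$ one at a time: since deleting a variable from an edge ideal produces the edge ideal of the vertex-deleted graph, after $n$ applications I obtain $I(G_{k+1})\cap T_k=I(G_k)$, so Lemma \ref{del}(i) gives ${\rm sreg}(I(G_k))\le{\rm sreg}(I(G_{k+1}))$ and Lemma \ref{del}(ii) gives the companion inequality for the quotients. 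Substituting into the two closed formulas yields parts (i) and (ii); each sequence is then non-increasing and bounded below by $0$, hence convergent.

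The main obstacle I anticipate is pinning down the induced-subgraph claim—choosing the correct middle layer $s$ and checking the biconditional at the two boundary sums—since this is the genuinely new combinatorial input that frees the result from the bipartite hypothesis. A secondary point requiring care is that the invoked polarization/Stanley-depth comparison, and the duality \eqref{dag}, are applied to the non-squarefree ideals $J(G)^{(k)}$ only through their squarefree polarizations $J(G_k)$, so one must verify that $(J(G)^{(k)})^{\rm pol}$ is genuinely the cover ideal of $G_k$ with the stated vertex count $nk$ (which is exactly the content borrowed from the proof of Theorem \ref{ldepth}).
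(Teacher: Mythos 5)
Your proposal is correct and takes essentially the same route as the paper's own proof: polarization via \cite[Corollary 4.4]{ikm} and \cite[Lemma 3.4]{s3}, the duality \eqref{dag} to reduce both parts to the monotonicity ${\rm sreg}(I(G_k))\leq {\rm sreg}(I(G_{k+1}))$ (and its quotient analogue), and the identification of $G_k$ with the induced subgraph of $G_{k+1}$ obtained by deleting a middle layer—your $s=\lceil (k+2)/2\rceil$ is exactly the paper's layer $\lfloor \frac{k+1}{2}\rfloor+1$—followed by iterated application of Lemma \ref{del}. The only differences are presentational: you make the closed formulas ${\rm sdepth}(S/J(G)^{(k)})=n-{\rm sreg}(I(G_k))$ explicit and you actually verify the induced-subgraph biconditional that the paper leaves as ``one can easily check.''
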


\begin{proof}
(i) We may assume that $G$ has no isolated vertex. Suppose that $V(G)=\{x_1, \ldots, x_n\}$ is the vertex set of $G$. We use the method of polarization. Thus, consider the ideals $(J(G)^{(k)})^{{\rm pol}}\subseteq T_k$ and $(J(G)^{(k+1)})^{{\rm pol}}\subseteq T_{k+1}$, where$$T_r=\mathbb{K}[x_{i,p}\mid 1\leq i\leq n,  1\leq p\leq r].$$By \cite[Corollary 4.4]{ikm}, we know that$${\rm sdepth}(T_r/(J(G)^{(r)})^{{\rm pol}})={\rm sdepth}(S/J(G)^{(r)})+n(r-1).$$ Hence, we must prove that$${\rm sdepth}(T_k/(J(G)^{(k)})^{{\rm pol}})\geq {\rm sdepth}(T_{k+1}/(J(G)^{(k+1)})^{{\rm pol}})-n.$$It follows from \cite[Lemma 3.4]{s3} that for $r=k, k+1$ the ideal $(J(G)^{(r)})^{{\rm pol}}$ is the cover ideal of a new graph, say $G_r$, with the vertex set $V(G_r)=\{x_{i,p}\mid 1\leq i\leq n,  1\leq p\leq r\}$ and the edge set$$E(G_r)=\{\{x_{i,p}, x_{j,q}\}\mid \{x_i, x_j\}\in E(G) \  {\rm and} \  p+q\leq r+1\}.$$ Therefore, we show that $${\rm sdepth}(T_k/J(G_k))\geq {\rm sdepth}(T_{k+1}/J(G_{k+1}))-n.$$Using equalities (\ref{dag}), it is sufficient to prove that$${\rm sreg}(I(G_k))\leq {\rm sreg}(I(G_{k+1})).$$

{\bf Claim.} $G_k$ is an induced subgraph of $G_{k+1}$.

{\it Proof of the claim.} Set$$W:=\{x_{i, \lfloor\frac{k+1}{2}\rfloor +1} \mid 1\leq i\leq n\}\subseteq V(G_{k+1}).$$Consider the map $\varphi : V(G_k)\rightarrow V(G_{k+1}\setminus W)$ which is defined as follows.

$$\varphi(x_{i,j}) =
\left\{
	\begin{array}{ll}
		x_{i,j}  & \mbox{if } 1\leq j\leq \lfloor\frac{k+1}{2}\rfloor \\
		x_{i,j+1} & \mbox{if } j\geq  \lfloor\frac{k+1}{2}\rfloor+1
	\end{array}
\right.$$

One can easily check that $\varphi$ induces an isomorphism between $G_k$ and $G_{k+1}\setminus W$. Thus, $G_k$ is an induced subgraph of $G_{k+1}$.

The assertion of (i) now follows from the claim and Lemma \ref{del}.

The proof of (ii) is similar.
\end{proof}

In \cite[Corollary 3.5]{s4}, the author proves that for any bipartite graph $G$, the modules $J(G)^k$ and $S/J(G)^k$ satisfy the Stanley's inequality for every integer $k\gg 0$. On the other hand, we know from \cite[Corollary 2.6]{grv} that for every bipartite graph $G$ and every integer $k\geq 1$, we have $J(G)^k=J(G)^{(k)}$. Thus, \cite[Corollary 3.5]{s4} is indeed saying that for any bipartite graph $G$, the modules $J(G)^{(k)}$ and $S/J(G)^{(k)}$ satisfy the Stanley's inequality for every integer $k\gg 0$. In Corollary \ref{sineq}, we will extend this result to any arbitrary graph $G$. The proof of Corollary \ref{sineq} is based on Theorem \ref{main}, which determines a lower bound for the Stanley depth of symbolic powers of cover ideals.

We first need the following simple lemma.

\begin{lem} \label{colon}
Let $G$ be a graph with $V(G)=\{x_1, \ldots, x_n\}$. Set $u=\prod_{i=1}^nx_i$. Then for every integer $k\geq 2$, we have $(J(G)^{(k)}:u)=J(G)^{(k-2)}$.
\end{lem}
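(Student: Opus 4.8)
The plan is to reduce everything to the combinatorial description of symbolic powers as monomial ideals and then carry out a direct exponent computation. Recall from the discussion preceding the statement (via \cite[Proposition 1.4.4]{hh}) that since $J(G)=\bigcap_{\{x_i,x_j\}\in E(G)}(x_i,x_j)$, for every integer $k\geq 1$ we have $J(G)^{(k)}=\bigcap_{\{x_i,x_j\}\in E(G)}(x_i,x_j)^{k}$. In particular $J(G)^{(k)}$ is a monomial ideal, and a monomial $m=\prod_{i=1}^{n}x_i^{c_i}$ lies in $(x_i,x_j)^{k}$ exactly when $c_i+c_j\geq k$. Hence $m\in J(G)^{(k)}$ if and only if $c_i+c_j\geq k$ for every edge $\{x_i,x_j\}\in E(G)$. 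I would isolate this edge-by-edge membership criterion as the single tool driving the whole argument.

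First I would observe that, because $J(G)^{(k)}$ is a monomial ideal and $u$ is a monomial, the colon $(J(G)^{(k)}:u)$ is again a monomial ideal, and a monomial $m$ belongs to it precisely when $mu\in J(G)^{(k)}$. Writing $m=\prod_{i=1}^{n}x_i^{c_i}$, multiplication by $u=\prod_{i=1}^{n}x_i$ raises every exponent by one, so $mu=\prod_{i=1}^{n}x_i^{c_i+1}$. Applying the membership criterion to $mu$, we get $mu\in J(G)^{(k)}$ if and only if $(c_i+1)+(c_j+1)\geq k$ for every edge $\{x_i,x_j\}$, that is, $c_i+c_j\geq k-2$ for every edge.

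Next I would match this against the criterion for $J(G)^{(k-2)}$. For $k\geq 3$, the condition $c_i+c_j\geq k-2$ on every edge is exactly membership of $m$ in $J(G)^{(k-2)}$, yielding $(J(G)^{(k)}:u)=J(G)^{(k-2)}$. For the boundary value $k=2$ the inequality reads $c_i+c_j\geq 0$, which holds for every monomial $m$, so the colon is all of $S=J(G)^{(0)}$, in agreement with the usual convention. Since both ideals are monomial and their monomial members coincide in every case, the two ideals are equal.

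There is no genuine obstacle in this argument; it is a routine verification once the right description is in place. The only points deserving care are invoking the primary-decomposition formula correctly so that membership becomes an edge-by-edge exponent inequality, and separating the boundary case $k=2$ so that the right-hand side $J(G)^{(0)}$ is read as $S$.
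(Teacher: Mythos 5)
Your proof is correct and follows essentially the same route as the paper: both arguments rest on the decomposition $J(G)^{(k)}=\bigcap_{\{x_i,x_j\}\in E(G)}(x_i,x_j)^k$ and the observation that coloning out $u$ lowers each edge constraint by exactly $2$. The paper phrases this at the level of ideals, using that colon distributes over intersection together with $((x_i,x_j)^k:u)=(x_i,x_j)^{k-2}$, whereas you verify the same identity monomial-by-monomial via exponent inequalities; the content, including the reading of $J(G)^{(0)}$ as $S$ when $k=2$, is the same.
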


\begin{proof}
Let $k\geq 2$ be an integer. For every couple of integers $1\leq i,j\leq n$ with $i\neq j$, we have $((x_i,x_j)^k:u)=(x_i,x_j)^{k-2}$. Hence,$$(J(G)^{(k)}:u)=\bigcap_{\{x_i, x_j\}\in E(G)}((x_i,x_j)^k:u)=\bigcap_{\{x_i, x_j\}\in E(G)}(x_i,x_j)^{k-2}=J(G)^{(k-2)}.$$
\end{proof}

We are now ready to determine a lower bound for the Stanley depth of symbolic powers of cover ideals.

\begin{thm} \label{main}
Let $G$ be a graph with $n$ vertices and $J(G)$ its cover ideal. Then for every integer $k\geq 1$, we have
$${\rm sdepth}(J(G)^{(k)})\geq n-\nu_{o}(G) \ \ \  {\rm and} \ \ \  {\rm sdepth}(S/J(G)^{(k)})\geq n-\nu_{o}(G)-1.$$
\end{thm}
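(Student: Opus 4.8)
The plan is to prove both inequalities together by induction on $k$, with Lemma \ref{colon} providing the descent from $k$ to $k-2$. Write $u=\prod_{i=1}^{n}x_i$ and $I_k=J(G)^{(k)}$, with the convention $I_0=S$, so that Lemma \ref{colon} reads $I_k:u=I_{k-2}$ for every $k\geq 2$. Since $u\in J(G)^{(2)}$ and symbolic powers both multiply and nest, we have $uI_{k-2}\subseteq I_k\subseteq I_{k-2}$, and the monomials of $I_k$ divisible by $u$ are exactly those of $u(I_k:u)=uI_{k-2}$. This yields the short exact sequence
$$0\longrightarrow uI_{k-2}\longrightarrow I_k\longrightarrow Q_k\longrightarrow 0,$$
where $Q_k$ is generated by the monomials of $I_k$ omitting at least one variable, together with the parallel sequence $0\to I_{k-2}/I_k\to S/I_k\to S/I_{k-2}\to 0$ for the quotients. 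As multiplication by $u$ preserves Stanley depth, ${\rm sdepth}(uI_{k-2})={\rm sdepth}(I_{k-2})$, and the standard behaviour of Stanley depth along short exact sequences gives ${\rm sdepth}(I_k)\geq\min\{{\rm sdepth}(I_{k-2}),{\rm sdepth}(Q_k)\}$; the first term is handled by the inductive hypothesis. The even powers bottom out at the trivial case $I_0=S$, so the only genuine base case is $k=1$, the squarefree cover ideal itself.

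Thus everything is reduced to the two boundary modules $Q_k$ and $I_{k-2}/I_k$, both of which are annihilated by $u$ and hence supported away from the interior of the coordinate orthant, and to the base case $k=1$. I would treat all three by a secondary induction on the number of vertices, via Lemma \ref{del} and restriction. Every generator in play omits some variable $x_j$ and therefore lives in the subring $\mathbb{K}[x_i\mid i\neq j]$; collecting generators by an omitted variable relates these modules to the cover ideal $J(G\setminus x_j)$ and its symbolic powers in $n-1$ variables. The essential combinatorial input is that $\nu_{o}(G\setminus x_j)\leq\nu_{o}(G)$, since any ordered matching of $G\setminus x_j$ is also an ordered matching of $G$. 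The inductive hypothesis applied to $G\setminus x_j$ then supplies Stanley spaces of dimension at least $(n-1)-\nu_{o}(G\setminus x_j)\geq n-\nu_{o}(G)-1$ on the surviving variables, and the omitted variable $x_j$ is re-adjoined as a free variable to recover the dimension $n-\nu_{o}(G)$ demanded for the ideal.

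The hard part will be exactly this boundary analysis and its dimension bookkeeping: one must check that, after restriction, the forced high powers $x_b^{k}$ coming from the neighbours $x_b$ of a deleted vertex $x_j$ still leave enough room to invoke the inductive hypothesis, that the resulting Stanley spaces lie inside $I_k$ and are pairwise disjoint, and that re-adjoining $x_j$ does not violate membership. An alternative packaging runs through polarization: by \cite[Lemma 3.4]{s3} the ideal $(J(G)^{(k)})^{{\rm pol}}$ is the cover ideal of the graph $G_k$, and combining \cite[Corollary 4.4]{ikm} with the equalities (\ref{dag}) turns both inequalities into the single family of estimates ${\rm sreg}(I(G_k))\leq\nu_{o}(G)+1$ and ${\rm sreg}(S/I(G_k))\leq\nu_{o}(G)$. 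I would stress that these are strictly stronger than anything a generic bound in terms of $\nu_{o}(G_k)$ could give, because $\nu_{o}(G_k)$ grows with $k$; one must exploit the explicit staircase description of $E(G_k)$, and Lemma \ref{del} together with the induced-subgraph inclusions $G_k\subseteq G_{k+1}$ at least shows that ${\rm sreg}(I(G_k))$ is non-decreasing, so that it would suffice to pin down its stable value.
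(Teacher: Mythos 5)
Your skeleton agrees with the paper's actual proof: descend from $k$ to $k-2$ via Lemma \ref{colon}, split $J(G)^{(k)}$ into the part divisible by $u$ (which is $uJ(G)^{(k-2)}$, handled by induction together with invariance of Stanley depth under multiplication by a monomial) and a boundary part spanned by monomials omitting at least one variable, and treat the boundary by passing to fewer variables. (The paper implements the splitting as iterated vector-space direct sums $J_{i-1}'=J_i\oplus x_iJ_i'$ rather than one short exact sequence, but that difference is cosmetic.) The genuine gap is in your boundary analysis, at two linked points. First, the identification is wrong: for a vertex $x_j$, the restriction $J(G)^{(k)}\cap S_j$, $S_j=\mathbb{K}[x_i\mid i\neq j]$, is \emph{not} related to $J(G\setminus x_j)^{(k)}$; a vertex cover avoiding $x_j$ must contain all of $N_G(x_j)$, so the correct statement (the paper's key input, \cite[Lemma 2.2]{s4}) is $J(G)^{(k)}\cap S_j=u_j^k\,J(G\setminus N_G[x_j])^{(k)}S_j$ with $u_j=\prod_{x\in N_G(x_j)}x$. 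For the path $x_1x_2x_3$ and $j=2$ one has $J(G)\cap\mathbb{K}[x_1,x_3]=(x_1x_3)$ while $J(G\setminus x_2)$ is the unit ideal. Second, and fatally, your bookkeeping rests on $\nu_{o}(G\setminus x_j)\leq\nu_{o}(G)$, which only yields Stanley spaces of dimension $(n-1)-\nu_{o}(G)=n-\nu_{o}(G)-1$, and you propose to recover the missing $1$ by ``re-adjoining $x_j$ as a free variable.'' That step fails: take $w=\bigl(\prod_{i\neq j}x_i\bigr)^k\in I_k$, a monomial divisible by every variable except $x_j$. Then $wx_j$ is divisible by $u$ and lies in $I_k$, hence in $u(I_k:u)=uI_{k-2}$, so in $Q_k=I_k/uI_{k-2}$ the class $[w]$ is annihilated by $x_j$. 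Thus $x_j$ cannot in general be adjoined freely to Stanley spaces of the boundary module (equivalently, in the direct-sum picture the piece $I_k\cap S_j$ is only an $S_j$-module), and this is exactly the point where the argument must, but does not, produce the extra $+1$.

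The correct source of that $+1$ is combinatorial, not a free variable: one needs $\nu_{o}(G\setminus N_G[x_j])\leq\nu_{o}(G)-1$ (the paper's \cite[Lemma 2.1]{s4}), which together with the factorization above and \cite[Theorem 1.1]{c1} gives ${\rm sdepth}_{S_j}(J(G)^{(k)}\cap S_j)\geq (n-1)-\nu_{o}(G\setminus N_G[x_j])\geq n-\nu_{o}(G)$ directly. A further point your sketch ignores: the pieces of your $Q_k$ are restrictions of the colon ideals $(I_k:x_1\cdots x_i)$, not of $I_k$ itself, so one also needs ${\rm sdepth}(I:v)\geq{\rm sdepth}(I)$ and ${\rm sdepth}(S/(I:v))\geq{\rm sdepth}(S/I)$ (\cite[Proposition 2]{p}, \cite[Proposition 2.7]{c}) to reduce them to the restricted symbolic powers, as the paper does; and the paper's induction is on (number of edges)$+k$, since the graph genuinely shrinks in the boundary step. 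Finally, your alternative polarization route is only a restatement of the problem: the bound ${\rm sreg}(I(G_k))\leq\nu_{o}(G)+1$ is exactly as strong as the theorem, and nothing in the sketch establishes it.
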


\begin{proof}
Assume that $V(G)=\{x_1, \ldots, x_n\}$. Let $m$ be the number of edges of $G$. We prove the assertions by induction on $m+k$. First, we can assume that $G$ has no isolated vertex, as deleting the isolated vertices does not change the cover ideal and the ordered matching number of $G$.

If $k=1$, the assertions follow from \cite[Theorem 2.4]{s4}. If $m=1$, then $G$ has two vertices, i.e., $n=2$, and $\nu_{o}(G)=1$. In this case, the second inequality is trivial, while the first inequality follows from \cite[Corollary 24]{h}. Therefore, assume that $k,m\geq 2$. Let $S_1=\mathbb{K}[x_2, \ldots, x_n]$ be the polynomial ring obtained from $S$ by deleting the variable $x_1$ and consider the ideals $J_1=J(G)^{(k)}\cap S_1$ and
$J_1'=(J(G)^{(k)}:x_1)$.

Now $J(G)^{(k)}=J_1\oplus x_1J_1'$ and $S/J(G)^{(k)}=(S_1/J_1)\oplus
x_1(S/J_1')$ (as vector spaces) and therefore by definition of  the Stanley depth we have
\[
\begin{array}{rl}
{\rm sdepth}(J(G)^{(k)})\geq \min \{{\rm sdepth}_{S_1}(J_1), {\rm sdepth}_S(J_1')\},
\end{array} \tag{1} \label{1}
\]
and
\[
\begin{array}{rl}
{\rm sdepth}(S/J(G)^{(k)})\geq \min \{{\rm sdepth}_{S_1}(S_1/J_1), {\rm sdepth}_S(S/J_1')\}.
\end{array} \tag{2} \label{2}
\]

Set $u_1=\prod_{x\in N_G(x_1)}x$ and notice that $J_1=(J(G)\cap S_1)^{(k)}$. Hence, by \cite[Lemma 2.2]{s4}, we conclude that $J_1=u_1^kJ(G\setminus N_G[x_1])^{(k)}S_1$. It follows from \cite[Theorem 1.1]{c1} that ${\rm sdepth}_{S_1}(J_1)={\rm sdepth}_{S_1}(J(G\setminus N_G[x_1])^{(k)}S_1)$ and ${\rm sdepth}_{S_1}(S_1/J_1)={\rm sdepth}_{S_1}(S_1/J(G\setminus N_G[x_1])^{(k)}S_1)$. Therefore, by \cite[Lemma 2.1]{s4} and the induction hypothesis, we conclude that
$${\rm sdepth}_{S_1}(J_1)={\rm sdepth}_{S_1}(J(G\setminus N_G[x_1])^kS_1)\geq n-1-\nu_{o}(G\setminus N_G[x_1])\geq n-\nu_{o}(G),$$
and similarly ${\rm sdepth}_{S_1}(S_1/J_1)\geq n-\nu_{o}(G)-1$. Thus, using the inequalities (\ref{1}) and (\ref{2}), it is enough to prove that ${\rm sdepth}_S(J_1')\geq n-\nu_{o}(G)$ and ${\rm sdepth}_S(S/J_1')\geq n-\nu_{o}(G)-1$.

For every integer $i$ with $2\leq i\leq n$, let $S_i=\mathbb{K}[x_1, \ldots, x_{i-1}, x_{i+1}, \ldots, x_n]$ be the polynomial ring obtained from $S$ by deleting the variable $x_i$ and consider the ideals $J_i'=(J_{i-1}':x_i)$ and $J_i=J_{i-1}'\cap S_i$.

{\bf Claim.} For every integer $i$ with $1\leq i\leq n-1$ we have$${\rm sdepth}(S/J_i')\geq \min\{n-\nu_{o}(G)-1, {\rm sdepth}(S/J_{i+1}')\}$$ and $${\rm sdepth}(J_i')\geq \min \{n-\nu_{o}(G), {\rm sdepth}(J_{i+1}')\}.$$

{\it Proof of the Claim.} For every integer $i$ with $1\leq i\leq n-1$, we have $J_i'=J_{i+1}\oplus x_{i+1}J_{i+1}'$ and $S/J_i'=(S_{i+1}/J_{i+1})\oplus
x_{i+1}(S/J_{i+1}')$ (as vector spaces) and therefore by definition of  the Stanley depth we have
\[
\begin{array}{rl}
{\rm sdepth}(J_i')\geq \min \{{\rm sdepth}_{S_{i+1}}(J_{i+1}), {\rm sdepth}_S(J_{i+1}')\},
\end{array} \tag{3} \label{3}
\]
and
\[
\begin{array}{rl}
{\rm sdepth}(S/J_i')\geq \min \{{\rm sdepth}_{S_{i+1}}(S_{i+1}/J_{i+1}), {\rm sdepth}_S(S/J_{i+1}')\}.
\end{array} \tag{4} \label{4}
\]

Notice that for every integer $i$ with $1\leq i\leq n-1$, we have $J_i'=(J(G)^{(k)}:x_1x_2\ldots x_i)$. Thus$$J_{i+1}=J_i'\cap S_{i+1}=((J(G)^{(k)}\cap S_{i+1}):_{S_{i+1}}x_1x_2\ldots x_i).$$Hence, using \cite[Proposition 2]{p} and \cite[Proposition 2.7]{c}, we conclude that

\[
\begin{array}{rl}
{\rm sdepth}_{S_{i+1}}(J_{i+1})\geq {\rm sdepth}_{S_{i+1}}(J(G)^{(k)}\cap S_{i+1})
\end{array} \tag{5} \label{5}
\]
and
\[
\begin{array}{rl}
{\rm sdepth}_{S_{i+1}}(S_{i+1}/J_{i+1})\geq {\rm sdepth}_{S_{i+1}}(S_{i+1}/(J(G)^{(k)}\cap S_{i+1})).
\end{array} \tag{6} \label{6}
\]

Set $u_{i+1}=\prod_{x\in N_G(x_{i+1})}x$. By \cite[Lemma 2.2]{s4}, $$J(G)\cap S_{i+1}=u_{i+1}J(G\setminus N_G[x_{i+1}])S_{i+1}=\big((u_{i+1})\cap J(G\setminus N_G[x_{i+1}])\big)S_{i+1}.$$ Therefore$$J(G)^{(k)}\cap S_{i+1}=(J(G)\cap S_{i+1})^{(k)}=u_{i+1}^kJ(G\setminus N_G[x_{i+1}])^{(k)}S_{i+1}.$$It follows from \cite[Theorem 1.1]{c1} that$${\rm sdepth}_{S_{i+1}}(J(G)^{(k)}\cap S_{i+1})={\rm sdepth}_{S_{i+1}}(J(G\setminus N_G[x_{i+1}])^{(k)}S_{i+1})$$and$${\rm sdepth}_{S_{i+1}}(S_{i+1}/(J(G)^{(k)}\cap S_{i+1}))={\rm sdepth}_{S_{i+1}}(S_{i+1}/J(G\setminus N_G[x_{i+1}])^{(k)}S_{i+1}).$$ Therefore by \cite[Lemma 2.1]{s4} and
the induction hypothesis we conclude that
$${\rm sdepth}_{S_{i+1}}(J(G)^{(k)}\cap S_{i+1})\geq n-1-\nu_{o}(G\setminus N_G[x_{i+1}])\geq n-\nu_{o}(G).$$
Similarly$${\rm sdepth}_{S_{i+1}}(S_{i+1}/(J(G)^{(k)}\cap S_{i+1}))\geq n-\nu_{o}(G)-1.$$Now the claim follows by inequalities (\ref{3}), (\ref{4}), (\ref{5}), and (\ref{6}).

Now, $J_n'=(J(G)^{(k)}:x_1x_2\ldots x_n)$ and hence, Lemma \ref{colon} implies that $J_n'=J(G)^{(k-2)}$. Thus, by induction hypothesis we conclude that ${\rm sdepth}(J_n')\geq n-\nu_{o}(G)$ and ${\rm sdepth}(S/J_n')\geq n-\nu_{o}(G)-1$. Therefore, using the claim repeatedly implies that ${\rm sdepth}(J_1')\geq n-\nu_{o}(G)$ and ${\rm sdepth}(S/J_1')\geq n-\nu_{o}(G)-1$. This completes the proof of the theorem.
\end{proof}

As an immediate consequence of Theorems \ref{main} and \ref{ldepth}, we deduce the following result.

\begin{cor} \label{sineq}
Let $G$ be a graph and $J(G)$ be its edge ideal. Then for every integer $k\geq 2\nu_{o}(G)-1$, the modules $J(G)^{(k)}$ and $S/J(G)^{(k)}$ satisfy the Stanley's inequality.
\end{cor}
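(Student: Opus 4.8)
The plan is to deduce the Stanley inequality for both modules directly by matching the exact depth value furnished by Theorem~\ref{ldepth} against the Stanley depth lower bounds of Theorem~\ref{main}; no new machinery is needed. Throughout, I fix an integer $k\geq 2\nu_{o}(G)-1$ and, as in the earlier proofs, assume without loss of generality that $G$ has at least one edge, so that $\nu_{o}(G)\geq 1$.

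First I would dispose of the quotient $S/J(G)^{(k)}$, which is immediate. Theorem~\ref{ldepth} supplies the exact value ${\rm depth}(S/J(G)^{(k)})=n-\nu_{o}(G)-1$, while Theorem~\ref{main} gives ${\rm sdepth}(S/J(G)^{(k)})\geq n-\nu_{o}(G)-1$. Chaining these, ${\rm depth}(S/J(G)^{(k)})=n-\nu_{o}(G)-1\leq {\rm sdepth}(S/J(G)^{(k)})$, which is precisely the Stanley inequality for $S/J(G)^{(k)}$.

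Next I would treat the ideal $J(G)^{(k)}$ itself. The one ingredient not provided verbatim by the earlier theorems is ${\rm depth}(J(G)^{(k)})$, which I would recover from the short exact sequence
\[
0\longrightarrow J(G)^{(k)}\longrightarrow S\longrightarrow S/J(G)^{(k)}\longrightarrow 0
\]
together with the depth lemma. Since ${\rm depth}(S)=n$ and ${\rm depth}(S/J(G)^{(k)})=n-\nu_{o}(G)-1<n$, the depth lemma yields the sharp equality ${\rm depth}(J(G)^{(k)})={\rm depth}(S/J(G)^{(k)})+1=n-\nu_{o}(G)$. On the other hand, Theorem~\ref{main} gives ${\rm sdepth}(J(G)^{(k)})\geq n-\nu_{o}(G)$. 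Combining the two, ${\rm depth}(J(G)^{(k)})=n-\nu_{o}(G)\leq {\rm sdepth}(J(G)^{(k)})$, establishing the Stanley inequality for $J(G)^{(k)}$.

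The whole argument is a short piece of bookkeeping once Theorems~\ref{ldepth} and~\ref{main} are available, so there is no genuine obstacle. The only point requiring a moment's care is the passage from ${\rm depth}(S/J(G)^{(k)})$ to ${\rm depth}(J(G)^{(k)})$: to obtain the clean relation ${\rm depth}(I)={\rm depth}(S/I)+1$ rather than a mere inequality, one must verify that the depth of the quotient is strictly below $n={\rm depth}(S)$. This is automatic here, since $n-\nu_{o}(G)-1\leq n-2<n$ under the standing assumption $\nu_{o}(G)\geq 1$.
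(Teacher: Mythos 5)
Your proposal is correct and follows exactly the route the paper intends: the corollary is stated there as an immediate consequence of Theorems~\ref{ldepth} and~\ref{main}, with the only unstated ingredient being the standard identity ${\rm depth}(I)={\rm depth}(S/I)+1$ for a nonzero proper monomial ideal, which you justify properly via the short exact sequence and the depth lemma. Your write-up simply makes explicit what the paper leaves implicit.
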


The following corollary is obtained by combining Theorems \ref{sdepthsym} and \ref{main}.

\begin{cor} \label{lim}
Let $G$ be a graph with $n$ vertices and $J(G)$ be its edge ideal. Then$$\lim_{k\rightarrow\infty}{\rm sdepth}(J(G)^{(k)})\geq n-\nu_{o}(G) \ \ \  {\rm and} \ \ \  \lim_{k\rightarrow\infty}{\rm sdepth}(S/J(G)^{(k)})\geq n-\nu_{o}(G)-1.$$
\end{cor}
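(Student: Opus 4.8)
The plan is to merge the two preceding results, so the argument is short. By Theorem \ref{sdepthsym}, both sequences $\{{\rm sdepth}(J(G)^{(k)})\}_{k=1}^\infty$ and $\{{\rm sdepth}(S/J(G)^{(k)})\}_{k=1}^\infty$ are non-increasing and convergent, so the two limits appearing in the statement exist. Moreover, since each sequence is non-increasing (and consists of non-negative integers, hence is eventually constant), its limit coincides with the infimum of its terms, that is, with its eventual constant value.

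Next I would bring in the uniform lower bounds furnished by Theorem \ref{main}: for every integer $k\geq 1$ one has
$${\rm sdepth}(J(G)^{(k)})\geq n-\nu_{o}(G) \quad {\rm and} \quad {\rm sdepth}(S/J(G)^{(k)})\geq n-\nu_{o}(G)-1.$$
Thus every term of the first sequence is at least the constant $n-\nu_{o}(G)$, and every term of the second is at least $n-\nu_{o}(G)-1$.

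Finally I would pass to the limit. Since a convergent sequence all of whose terms are bounded below by a constant $c$ has limit at least $c$ (indeed here the limit is the infimum of the terms, which cannot drop below $c$), taking $c=n-\nu_{o}(G)$ for the first sequence and $c=n-\nu_{o}(G)-1$ for the second yields exactly the two claimed inequalities
$$\lim_{k\rightarrow\infty}{\rm sdepth}(J(G)^{(k)})\geq n-\nu_{o}(G) \quad {\rm and} \quad \lim_{k\rightarrow\infty}{\rm sdepth}(S/J(G)^{(k)})\geq n-\nu_{o}(G)-1.$$
There is no real obstacle at this stage: all the genuine content lies in Theorems \ref{sdepthsym} and \ref{main}, and the only point requiring (minimal) care is that the non-strict lower bound is preserved upon passing to the limit, which is immediate for a monotone sequence converging to its infimum.
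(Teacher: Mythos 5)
Your proof is correct and matches the paper's argument exactly: the paper likewise obtains this corollary by combining Theorem \ref{sdepthsym} (monotonicity, hence convergence, of the two sequences) with the uniform lower bounds of Theorem \ref{main}, then passing to the limit. Nothing further is needed.
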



\section*{Acknowledgment}
The author thanks the referee for careful reading of the paper and for helpful comments.



\end{document}